\DeclareMathAlphabet{\pazocal}{OMS}{zplm}{m}{n}
\newtheorem{theorem}{Theorem}[section]
\newtheorem{proposition}[theorem]{Proposition}
\newtheorem{conjecture}[theorem]{Conjecture}
\theoremstyle{definition}
\newtheorem{definition}[theorem]{Definition}
\newcommand{\CA}{\text{CA}}
\newcommand{\CAFE}{\text{CAFE}}
\numberwithin{equation}{section}
\let\oldproofname=\proofname
\renewcommand{\proofname}{\rm\bf{\oldproofname}}
\def\imod#1{\allowbreak\mkern10mu({\operator@font mod}\,\,#1)}
\newcommand{\ignore}[1]{}
\begin{document}

\title{Non-uniform covering array with symmetric forbidden edge constraints}

\author{Brett Stevens\\
School of Mathematics and Statistics\\
Carleton University\\
1125 Colonel By Drive\\
Ottawa, ON, K1S 5B6\\
Canada
\texttt{brett@math.carleton.ca}
}

\date{\today}

\maketitle

\begin{abstract}
  It has been conjectured that whenever an optimal covering array exists there is also a uniform covering array with the same parameters and this is true for all known optimal covering arrays.  When used as a test suite, the application context may have pairs of parameters that must be avoided and Covering arrays avoiding forbidden edges (CAFE) are a generalization accommodating this requirement.  We prove that there is an arc-transitive, highly symmetric constraint graph where the unique optimal covering array avoiding forbidden edges is not uniform.  This does not refute the conjecture but it does show that placing even highly symmetric constraints on covering arrays can force non-uniformity of optimal arrays.
\end{abstract}

A column of a covering array is {\em uniform} if the number of appearances of any two symbols differ by no more than 1.  For all the known optimal covering arrays there exists an optimal covering array with the same parameters that is uniform on every column.  This includes the two infinite families of optimal covering arrays, orthogonal arrays \cite{hedayat_orthogonal_1999} and strength 2 binary covering arrays constructed by Katona and Kleitman and Spencer \cites{katona_two_1973,kleitman_families_1973}, and 21 other known optimal covering arrays of strength 2\cite{KMNNOS_18}.  It has been conjectured that there exists a uniform covering array of optimal size for all parameters \cite{MR2156344} and this conjecture was the motivation for a recent enumeration of small covering arrays \cite{KMNNOS_18}.

An analogous uniformity conjecture for covering and packing (error-correcting) codes has been disproven. For binary covering codes, there are sets of parameters for which all optimal codes are nonuniform \cite{ostergaa_rd_disproof_2003}. Furthermore, for binary error-correcting codes, there are sets of parameters for which all optimal codes have a nonuniform distribution of coordinate values in all coordinates \cite{ostergaa_rd_optimal_2013}.

In this note we prove that the unique optimal covering array avoiding a highly symmetric set of forbidden edge constraints has three of four columns non-uniform and is therefore not uniform itself.  This does not refute the conjecture from \cite{KMNNOS_18} but it does show that placing even highly symmetric constraints on covering arrays can force non-uniformity of optimal arrays.  The uniformity question for other covering array generalizations and constraints is open.

\begin{definition}
A {\em covering array} $\CA(N;t,k,v)$ of strength $t$ is an $N \times k$
array of symbols from $[0,v-1]$ such that in every
$N \times t$ subarray, every $t$-tuple occurs in at least one row. A covering array is \emph{optimal} if it has the
smallest possible $N$ for given $t$, $k$, and $v$, and \emph{uniform}
if every symbol occurs either $\lfloor N/v \rfloor$ or $\lceil N/v \rceil$
times in every column.

Let $G=G_{k,v}$ be a k-partite hypergraph graph with vertex set $V(G) = \{ v_{i,a}:i \in [1,k],\;a \in [0,v-1]\}$. A row of a covering array ${\bf c} \in [0,v-1]^k$ {\em avoids} $G$ if for
all $\{i_1, i_2, \ldots , i_t\} \subset [1,k]$, we have $\{v_{i_1,{\bf c}_{i_1}},\ldots,v_{i_t,{\bf c}_{i_t}}\}\not\in E(G)$. A {\em covering array with forbidden edges} $\CAFE(N;G)$ is a $\CA(N;t,k,v)$ where every row avoids $G$.
\end{definition} For more about $\CAFE$s see \cite{danziger_covering_2009} and for more about covering arrays in general see \cites{hartman_software_2005,colbourn_combinatorial_2004,lawrence_survey_2011}.

Let $G = G_{4,3}$ have edges
\[
  E(G) = \left \{ \{v_{0,0},v_{1,0}\},\{v_{2,0},v_{3,0}\},\{v_{0,1},v_{2,1}\},\{v_{1,1},v_{3,1}\},\{v_{0,2},v_{3,2}\},\{v_{1,2},v_{2,2}\} \right \}
\]
as shown in Figure~\ref{graph_fig}.
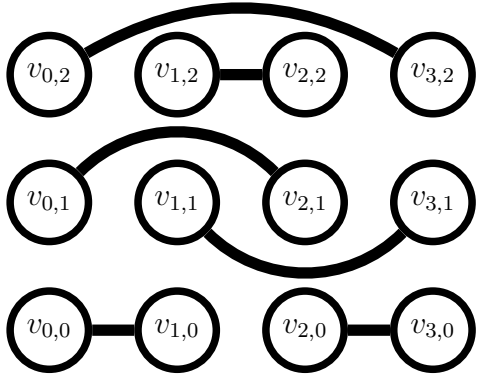
\begin{figure}
  \begin{center}
\begin{tikzpicture}

      \tikzmath{\y1 = 1.7; \y2=1.5; }
      \node (00) at (0,0) [circle,line width=1.0mm,draw=black]  {$v_{0,0}$};
      \node (01) at (0,\y1) [circle,line width=1.0mm,draw=black]  {$v_{0,1}$};
      \node (02) at (0,2*\y1) [circle,line width=1.0mm,draw=black]  {$v_{0,2}$};
      \node (10) at (1*\y1,0) [circle,line width=1.0mm,draw=black]  {$v_{1,0}$};
      \node (11) at (1*\y1,1*\y1) [circle,line width=1.0mm,draw=black]  {$v_{1,1}$};
      \node (12) at (1*\y1,2*\y1) [circle,line width=1.0mm,draw=black]  {$v_{1,2}$};
      \node (20) at (2*\y1,0) [circle,line width=1.0mm,draw=black]  {$v_{2,0}$};
      \node (21) at (2*\y1,1*\y1) [circle,line width=1.0mm,draw=black]  {$v_{2,1}$};
      \node (22) at (2*\y1,2*\y1) [circle,line width=1.0mm,draw=black]  {$v_{2,2}$};
      \node (30) at (3*\y1,0) [circle,line width=1.0mm,draw=black]  {$v_{3,0}$};
      \node (31) at (3*\y1,1*\y1) [circle,line width=1.0mm,draw=black]  {$v_{3,1}$};
      \node (32) at (3*\y1,2*\y1) [circle,line width=1.0mm,draw=black]  {$v_{3,2}$};

      \draw [-,line width=1.5mm,draw=black] (00) to[out=0,in=180] (10);
      \draw [-,line width=1.5mm,draw=black] (20) to[out=0,in=180] (30);
      \draw [-,line width=1.5mm,draw=black] (01) to[out=45,in=135] (21);
      \draw [-,line width=1.5mm,draw=black] (11) to[out=315,in=225] (31);
      \draw [-,line width=1.5mm,draw=black] (02) to[out=30,in=150] (32);
      \draw [-,line width=1.5mm,draw=black] (12) to[out=0,in=180] (22);
    \end{tikzpicture}
    \end{center}
    \caption{The graph $G$.\label{graph_fig}}
  \end{figure}
  The automorphism group of $G$ respecting its partition structure has order 24 and is generated by
  \begin{align*}
  g_1 &=  (v_{0,0} v_{1,0})(v_{0,1}, v_{1,2})(v_{0,2}, v_{1,1})(v_{2,1},v_{2,2})(v_{3,1},v_{3,2}), \\
  g_2 &= (v_{0,0},v_{1,2},v_{2,0},v_{3,2})(v_{1,0},v_{2,2},v_{3,0},v_{0,2})(v_{0,1},v_{1,1},v_{2,1},v_{3,1}).
  \end{align*}
  Under this automorphism group, $G$ is arc-transitive which implies vertex transitive.
  This is a high degree of symmetry.  In particular, all the symbols in the covering array (vertices in the graph) are equivalent.  Despite this symmetry we will show that the unique $\CAFE$ on $G$ is not uniform.  

\begin{proposition}
  A $\CAFE(12,G)$ is optimal.
\end{proposition}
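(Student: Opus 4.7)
My plan is to establish optimality in two parts: construct a $\CAFE(12,G)$ and show no $\CAFE(N,G)$ exists for $N<12$.

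For the lower bound, I would introduce the notion of a \emph{hard pair}. Each of the six column pairs $(i,j)$ has a unique forbidden pair of the form $(s,s)$ for some $s\in\{0,1,2\}$ (inspection of the edge list shows every forbidden edge matches same-symbol vertices). Call a non-forbidden pair $(a,b)$ on $(i,j)$ hard if $a\neq b$ and $a,b\in\{0,1,2\}\setminus\{s\}$; there are exactly two such pairs per column pair, for twelve in total. The key lemma will be: every valid row (independent $4$-transversal of $G$) induces at most one hard pair among its six column-pair restrictions. Granted the lemma, a $\CAFE$ must contain a row realizing each of the twelve hard pairs, so $N\geq 12$.

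To prove the lemma, I would suppose some valid row ${\bf c}$ induces hard pairs on distinct column pairs $P_1,P_2$ and derive a contradiction. The six column pairs on $\{0,1,2,3\}$ correspond to the edges of $K_4$, and the assignment of forbidden symbols yields the $1$-factorization $M_0=\{\{0,1\},\{2,3\}\}$, $M_1=\{\{0,2\},\{1,3\}\}$, $M_2=\{\{0,3\},\{1,2\}\}$. If $P_1$ and $P_2$ share a column $i$, the three pairs through $i$ carry distinct forbidden symbols, and the hardness conditions pin down ${\bf c}$ on the three columns of $P_1\cup P_2$ to the three distinct symbols of $\{0,1,2\}$; the three constraints through the fourth column then (via the antipodal structure of the $1$-factorization) exclude all of $0,1,2$ from the fourth coordinate, impossible. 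If $P_1$ and $P_2$ are disjoint they must both lie in one $M_s$, hardness restricts all four coordinates of ${\bf c}$ to the two symbols $\{0,1,2\}\setminus\{s\}$, and a direct check of the four possible assignments shows each violates another column-pair constraint.

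For the upper bound, I would exhibit an explicit set of twelve valid rows and verify that all $48$ non-forbidden pair values appear. Because each row covers at most one hard pair and all twelve must be covered, such a $\CAFE$ must consist of twelve rows each covering exactly one hard pair; each hard pair is realized by exactly two valid rows, so the construction reduces to a careful choice among these options guaranteeing that the thirty-six easy pairs are also covered, a finite check by hand.

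The main technical effort is the case analysis for the key lemma, but the rigid structure of the $1$-factorization of $K_4$ keeps the essentially distinct cases to only two, each quickly dispatched. The upper-bound construction is then largely bookkeeping once attention is restricted to hardness-$1$ rows.
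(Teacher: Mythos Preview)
Your approach is correct and is essentially the paper's: your ``hard pairs'' are exactly what the paper calls \emph{solitary pairs}, and the core lemma---that no valid row contains two of them, forcing $N\ge 12$---is the same. The difference lies only in how that lemma is established. The paper invokes the arc-transitivity of $\Aut(G)$ on the twelve solitary pairs to reduce to checking a single pair $\{v_{0,1},v_{1,2}\}$ against the others, a three-line argument once the symmetry is granted. You instead exploit the $1$-factorization structure of $K_4$ directly and split into the two cases ``$P_1,P_2$ share a column'' and ``$P_1,P_2$ disjoint''; this avoids any appeal to the automorphism group at the cost of a slightly longer but entirely elementary case check. Both routes are clean; yours is more self-contained, the paper's is shorter if the reader accepts the symmetry claim. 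Note also that the paper's proof of the proposition gives only the lower bound, deferring the explicit $\CAFE(12,G)$ to the subsequent theorem, whereas you propose to include the construction here---either placement is fine.
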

\begin{proof}
  A pair $\{v_{i_1, s_1},v_{i_2,s_2}\}$ with $i_1 \neq i_2$ and $s_1 \neq s_2$ is {\em solitary} if it is disjoint from the unique edge between parts $i_1$ and $i_2$.  The solitary pairs are
  \[
    \begin{array}{rccl}
      \left \{ \{v_{0,1},v_{1,2} \}, \right . & \{v_{0,2},v_{1,1} \}, & \{v_{2,1},v_{3,2} \}, \\
      \{v_{2,2},v_{3,1} \}, & \{v_{0,0},v_{2,2} \}, & \{v_{0,2},v_{2,0} \}, \\
      \{v_{1,0},v_{3,2} \}, & \{v_{1,2},v_{3,0} \}, & \{v_{0,0},v_{3,1} \},  \\
      \{v_{0,1},v_{3,0} \}, & \{v_{1,0},v_{2,1} \}, & \left .\{v_{1,1},v_{2,0} \} \right \}
    \end{array}.
  \]
  We show that no two solitary pairs can be in the same row of a $\CAFE(N;G)$. The set of solitary pairs is an orbit under the automorphism group of $G$, thus we need only prove that the first solitary pair $\{v_{0,1},v_{1,2} \}$ cannot appear in a row with any other solitary pair in a $\CAFE(N;G)$. If ${\bf c}$ is a row with ${\bf c}_0 = 1$ and ${\bf c}_1 = 2$, avoiding the edges of $G$ forces ${\bf c}_2 = 0$.  Thus the only other solitary pairs which ${\bf c}$  could contain are $\{v_{1,2},v_{3,0} \}$ or  $\{v_{0,1},v_{3,0} \}$, but both of these are forbidden from appearing in ${\bf c}$ by the edge $\{v_{2,0},v_{3,0} \} \in E(G)$.
    \end{proof}

    \begin{theorem}
      Up to isomorphism there is a unique $\CAFE(12;G)$ that has a single uniform column and three non-uniform columns.
      \end{theorem}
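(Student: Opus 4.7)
The plan is to reduce the counting of $\CAFE(12;G)$'s to a small constraint-satisfaction problem on a graph of three $4$-cycles, and then verify the column-distribution and isomorphism statements by direct computation. From the Proposition's proof, no row of a $\CAFE(N;G)$ contains two solitary pairs, and the $12$ solitary pairs must all be covered in the $12$ rows of a $\CAFE(12;G)$, so the rows of a $\CAFE(12;G)$ are in bijection with the $12$ solitary pairs.

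For each solitary pair $\{v_{i_1,s_1},v_{i_2,s_2}\}$ I enumerate the valid rows ${\bf c}$ with $c_{i_1}=s_1$ and $c_{i_2}=s_2$. The key observation is that, since the six forbidden edges come from the three perfect matchings of $K_4$ on the four parts, the forbidden edges at the four column pairs involving exactly one of $i_1,i_2$ force one of the remaining coordinates $c_{i_3},c_{i_4}$ to equal the forbidden symbol of the pair $(i_1,i_2)$, and then the forbidden edge at the last column pair leaves exactly two possibilities for the other. By arc-transitivity of $G$, one representative suffices to confirm this for all $12$ solitary pairs. Hence every $\CAFE(12;G)$ is encoded by a binary choice vector $x=(x_1,\ldots,x_{12})\in\{1,2\}^{12}$, giving $2^{12}$ candidates.

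Coverage imposes the $6\cdot 6 = 36$ conditions that each non-solitary non-forbidden pattern appear in some chosen row. Tabulating each of the $24$ candidate rows' six column-pair contributions shows that in each column pair, $4$ of the $6$ patterns are covered by some row-position under both of its choices (and so are automatic), while $2$ patterns depend on $x$. The resulting $12$ active conditions each take the form $x_i=a\lor x_j=b$; each variable $x_i$ appears in exactly two active conditions (one for each of its values), so the incidence graph $H$ with $12$ vertices (the active conditions) and $12$ edges (the variables) is $2$-regular. Direct inspection shows $H$ decomposes into three vertex-disjoint $4$-cycles. On any $4$-cycle the edge-orientations with in-degree at least $1$ at every vertex are precisely the two cyclic orientations, so the system has exactly $2^3=8$ satisfying assignments, and hence at most $8$ distinct $\CAFE(12;G)$'s.

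For any one of these $8$ arrays a direct count of column-symbol frequencies shows that exactly one column realizes the distribution $(4,4,4)$ while the other three realize $(3,4,5)$ in some symbol order. Transitivity of the $S_4$ action of $\Aut(G)$ on the four parts permutes the four possible ``uniform columns,'' and within the stabilizer of each such column an explicit order-$2$ element of $\Aut(G)$ exchanges the two arrays sharing that uniform column; hence all $8$ arrays lie in a single $\Aut(G)$-orbit, establishing uniqueness up to isomorphism. The main technical obstacle is Step $3$: isolating the $12$ active conditions and verifying the three-$4$-cycle structure of $H$ requires careful tabulation of all $24$ candidate rows' contributions to all $6$ column pairs, which is a finite but error-prone bookkeeping step.
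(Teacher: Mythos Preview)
Your proposal is correct and follows essentially the same route as the paper: the bijection between rows and solitary pairs, the forced third coordinate leaving a binary choice in each row, the reduction to twelve residual coverage constraints yielding $2^3=8$ completions, and the collapse to a single $\Aut(G)$-orbit via part-transitivity plus an order-$2$ automorphism. The only cosmetic difference is that you package the constraint analysis as a $2$-regular incidence graph decomposing into three $4$-cycles (solved by cyclic orientations), whereas the paper organises the same twelve constraints by the three row-blocks of Table~\ref{cafe1} and directly enumerates the two completions of each block; your three $4$-cycles are exactly those three blocks.
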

      \begin{proof}
    If there is a $\CAFE(12;G)$ then there is one row for each solitary pair.  A row with a solitary pair avoiding $G$ forces one additional symbol in that row. Thus the array must have form given in Table~\ref{cafe1}.
    \begin{table}
      \begin{center}
      \begin{tabular}{|cccc|}
        \hline
        1 & 2 & 0 &   \\
        2 & 1 &   & 0 \\
        0 &   & 1 & 2 \\
          & 0 & 2 & 1 \\
        \hline
        0 & 1 & 2 & \\
        1 & 0 & & 2 \\
        2 & & 0 & 1 \\
          & 2 & 1 & 0 \\
        \hline
        2 & 0 & 1 & \\
        0 & 2 & & 1 \\
        1 & & 2 & 0 \\
        & 1 & 0 & 2 \\ \hline
      \end{tabular}
      \end{center}
      \caption{Structure forced on $\CAFE(12;G)$ by solitary pairs. \label{cafe1}}
    \end{table}
    The only pairs not covered in this array are the 18 pair of repeated symbols in each pair of columns. Six of these are forbidden by $G$, but the remaining twelve must be covered in the $\CAFE$. 
    The pairs
    \[
      \left \{
        \{v_{0,2},v_{2,2}\},\{v_{1,2},v_{3,2}\}, \{v_{0,1},v_{3,1}\},\{v_{1,1},v_{2,1}\}  
      \right \}
      \]
      can only be covered in the first block of four rows. Thus the first block can only be completed in two possible ways:
      \begin{align*}
         \begin{array}{|cccc|}
        \hline
        1 & 2 & 0 & \textcolor{blue}{2}  \\
        2 & 1 & \textcolor{blue}{2}  & 0 \\
        0 & \textcolor{blue}{1}  & 1 & 2 \\
        \textcolor{blue}{1}  & 0 & 2 & 1 \\
           \hline
           \end{array} &&
         \begin{array}{|cccc|}
        \hline
        1 & 2 & 0 & \textcolor{blue}{1}  \\
        2 & 1 & \textcolor{blue}{1}  & 0 \\
        0 & \textcolor{blue}{2}  & 1 & 2 \\
        \textcolor{blue}{2}  & 0 & 2 & 1 \\
           \hline
           \end{array} 
      \end{align*}
      The pairs
    \[
      \left \{
        \{v_{0,2},v_{1,2}\},\{v_{2,2},v_{3,2}\}, \{v_{0,0},v_{3,0}\},\{v_{1,0},v_{2,0}\}  
      \right \}
      \]
      can only be covered in the second block of four rows. Thus the second block can only be completed in two ways:
      \begin{align*}
         \begin{array}{|cccc|}
        \hline
        0 & 1 & 2 & \textcolor{blue}{1}  \\
        1 & 0 & \textcolor{blue}{1}  & 2 \\
        2 & \textcolor{blue}{0}  & 0 & 1 \\
        \textcolor{blue}{0}  & 2 & 1 & 0 \\
           \hline
           \end{array} &&
         \begin{array}{|cccc|}
        \hline
        0 & 1 & 2 & \textcolor{blue}{0}  \\
        1 & 0 & \textcolor{blue}{0}  & 2 \\
        2 & \textcolor{blue}{1}  & 0 & 1 \\
        \textcolor{blue}{1}  & 2 & 1 & 0 \\
           \hline
           \end{array} 
      \end{align*}
      The pairs
    \[
      \left \{
        \{v_{0,1},v_{1,1}\},\{v_{2,1},v_{3,1}\}, \{v_{0,0},v_{2,0}\},\{v_{1,0},v_{3,0}\}  
      \right \}
      \]
      can only be covered in the last block of four rows. Thus the third block can only be completed in two ways:
      \begin{align*}
         \begin{array}{|cccc|}
        \hline
        2 & 0 & 1 & \textcolor{blue}{0}  \\
        0 & 2 & \textcolor{blue}{0}  & 1 \\
        1 & \textcolor{blue}{2}  & 2 & 0 \\
        \textcolor{blue}{2}  & 1 & 0 & 2 \\
           \hline
           \end{array} &&
         \begin{array}{|cccc|}
        \hline
        2 & 0 & 1 & \textcolor{blue}{2}  \\
        0 & 2 & \textcolor{blue}{2}  & 1 \\
        1 & \textcolor{blue}{0}  & 2 & 0 \\
        \textcolor{blue}{0}  & 1 & 0 & 2 \\
           \hline
           \end{array} 
        \end{align*}
        
Since the automorphism group of $G$ is transitive on the parts of the vertex partition, if there is a uniform column in $\CAFE(12;G)$ it can be assumed to be the first column. Cell $(3,0)$ in the $\CAFE$ can either be a 1 or 2, cell $(7,0)$ can be a 0 or 2 and cell $(11,0)$ can be a 0 or a 1.  If the first column is uniform then this forces the array to be one of the two shown in Table~\ref{cafe2}. 
\begin{table}
  \begin{align*}
  \begin{array}{|cccc|}
    \hline
    1 & 2 & 0 &   \\
    2 & 1 &   & 0 \\
    0 &   & 1 & 2 \\
    1 & 0 & 2 & 1 \\
    \hline
        0 & 1 & 2 & \\
        1 & 0 & & 2 \\
        2 & & 0 & 1 \\
        2 & 2 & 1 & 0 \\
        \hline
        2 & 0 & 1 & \\
        0 & 2 & & 1 \\
        1 & & 2 & 0 \\
        0 & 1 & 0 & 2 \\ \hline
  \end{array} &&
                 \begin{array}{|cccc|}
        \hline
        1 & 2 & 0 &   \\
        2 & 1 &   & 0 \\
        0 &   & 1 & 2 \\
        2 & 0 & 2 & 1 \\
        \hline
        0 & 1 & 2 & \\
        1 & 0 & & 2 \\
        2 & & 0 & 1 \\
        0 & 2 & 1 & 0 \\
        \hline
        2 & 0 & 1 & \\
        0 & 2 & & 1 \\
        1 & & 2 & 0 \\
        1 & 1 & 0 & 2 \\ \hline
                 \end{array}
  \end{align*}
      \caption{Two possible partial $\CAFE(12;G)$ with column 0 uniform. \label{cafe2}}
    \end{table}
    The second is isomorphic to the first under the left action of $g_2 g_1 g_2 g_1 g_2^2$ composing from right to left and a suitable permutation of the rows. So we only consider the first array.  Checking the remaining pairs which must be covered there is only one way to complete the array, shown in Table~\ref{cafe3}.
    \begin{table}
      \begin{center}
      \begin{tabular}{|cccc|}
        \hline
        1 & 2 & 0 & 2 \\
        2 & 1 & 2 & 0 \\
        0 & 1 & 1 & 2 \\
        1 & 0 & 2 & 1 \\
        \hline
        0 & 1 & 2 & 0 \\
        2 & 0 & 0 & 1 \\
        1 & 0 & 2 & 2 \\
        2 & 2 & 1 & 0 \\
        \hline
        0 & 2 & 1 & 1 \\
        1 & 1 & 2 & 0 \\
        2 & 0 & 1 & 0 \\
        0 & 1 & 0 & 2 \\ \hline
      \end{tabular}
      \end{center}
      \caption{Unique $\CAFE(12;G)$ with column 0 uniform. \label{cafe3}}
    \end{table}

There are $2^3 = 8$ possible choices to complete the three blocks of four rows.  Each one of them yields a $\CAFE(12;G)$ that has a single uniform column, so, up to isomorphism, there is a unique non-uniform $\CAFE(12;G)$.
    \end{proof}
    
Stevens and Meagher formulated their conjecture on uniformity of covering arrays in 2005.
\begin{conjecture} \cite[Conjecture
1]{MR2156344} \label{conj:uca}
If there exists a $\CA(N,k,v)$ then there also exists a uniform $\CA(N,k,v)$. 
\end{conjecture}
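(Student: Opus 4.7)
The plan is to transform an arbitrary optimal $\CA(N;t,k,v)$ into a uniform one of the same size via coverage-preserving modifications. The natural first attempt is a column-wise symbol-swap reduction: if column $j$ has a symbol $a$ appearing more than $\lceil N/v\rceil$ times and a symbol $b$ fewer than $\lfloor N/v\rfloor$ times, locate a row in which the entry in column $j$ can be changed from $a$ to $b$ without destroying the coverage of any $t$-tuple elsewhere in the array. Iterating such moves to a fixed point would produce a uniform array of the same size.

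First I would introduce a non-negativity potential such as
\[
\Phi(A) \;=\; \sum_{j=1}^{k}\sum_{a=0}^{v-1}\bigl(n_{j,a}(A)-N/v\bigr)^{2},
\]
where $n_{j,a}(A)$ counts the occurrences of symbol $a$ in column $j$ of $A$, and attempt to show that $\Phi(A)>0$ always forces the existence of a coverage-preserving modification (a single swap, or more plausibly a short \emph{cycle} of swaps across several rows) that strictly decreases $\Phi$. In the strength-$2$ case the obstruction can be encoded as a bipartite graph between over-represented $(j,a)$-cells and under-represented $(j,b)$-cells, with edges for pairs of rows whose simultaneous modification preserves coverage, and one could try to invoke a Hall- or matching-type theorem to guarantee a rebalancing move. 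A parallel inductive route — remove a column, apply the conjecture to the smaller array, then rebuild the removed column from the outstanding uncovered pairs via an equitable edge-colouring of the pair graph — is also worth pursuing.

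The main obstacle, and the reason this conjecture has resisted proof since 2005, is step two: in an optimal array many $t$-tuples are covered exactly once, so locally every candidate swap threatens to uncover a pair, and there is no a priori reason a coverage-preserving swap cycle of bounded length exists. The present paper sharpens this difficulty on the constrained side — the unique $\CAFE(12;G)$ is non-uniform precisely because the forbidden-edge constraints eliminate every scrap of slack. Any proof of Conjecture~\ref{conj:uca} must therefore exploit the fact that unconstrained covering arrays carry strictly more freedom than constrained ones: for instance, by passing to a fractional relaxation of the covering condition whose optimum is attained by a uniform design (averaging over the natural $S_{v}^{k}$ action on column labels gives this immediately) and then engineering a uniformity-preserving rounding, or by a group-invariant integer-programming formulation. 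I would expect the first real progress to come from the strength-$2$, $v=2$ case extending the Katona–Kleitman–Spencer arrays, and would not be surprised if a full proof required genuinely new ideas.
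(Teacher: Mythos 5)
You have not proven anything, and neither does the paper: the statement you were given is a \emph{conjecture}, quoted from Stevens and Meagher (2005), which the paper explicitly leaves open. The paper's contribution is in the opposite direction — it exhibits a highly symmetric constraint graph $G$ for which the unique optimal $\CAFE(12;G)$ is non-uniform, showing the analogous statement fails for constrained covering arrays. So there is no proof in the paper to compare yours against, and your proposal, which you frankly present as a plan, does not close the gap. The central step — that $\Phi(A)>0$ always admits a coverage-preserving swap or short swap cycle that strictly decreases $\Phi$ — is asserted, not established, and it is essentially a restatement of the conjecture itself. Concretely: in an optimal $\CA(N;t,k,v)$ many $t$-tuples are covered exactly once, so every entry participating in such a tuple is pinned against single swaps; a local-search process driven by $\Phi$ can therefore terminate at a non-uniform fixed point, and nothing in your setup rules this out. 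The Hall-type condition for your bipartite rebalancing graph between over- and under-represented cells is likewise unverified, and there is no structural reason it should hold — optimality is exactly the regime where slack disappears.

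Your auxiliary routes have the same character. The averaging argument over the $S_v^k$ action does show the fractional relaxation has a uniform optimum, but that observation is immediate and the entire difficulty lives in the rounding step, for which you supply no mechanism that preserves both coverage and the row count $N$. The inductive route also breaks at the reattachment step: deleting a column of a $\CA(N;2,k,v)$ gives a $\CA(N;2,k-1,v)$, but an equitable edge-colouring of the uncovered-pair graph does not in general produce a single new column covering all $v^2$ pairs against each of the $k-1$ remaining columns within the same $N$ rows. One genuinely useful instinct in your write-up is the observation that any proof must exploit the absence of constraints — the paper's $\CAFE(12;G)$ example shows that symmetry of the constraint set alone cannot rescue uniformity, since there the forbidden edges and solitary pairs force every cell of the array (Tables~\ref{cafe1}--\ref{cafe3}), leaving no rebalancing moves at all. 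But identifying where the freedom must come from is not the same as using it; as it stands, Conjecture~\ref{conj:uca} remains open and your proposal is a (reasonable) research program, not a proof.
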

 With the most current computational techniques to determine as many optimal covering array as possible, the conjecture is still consistent with all known evidence \cite{KMNNOS_18}. In this note we have shown that the conjecture does not hold for the restricted covering arrays avoiding forbidden edges even when the constraints are as symmetric as possible on both the symbols in the array and the constraints.  It would be interesting to look at the uniformity properties of other generalizations of covering arrays.  Covering arrays on vertex transitive graphs would be of interest \cite{MR2156344}.


\bibliographystyle{abbrvnat}
\bibliography{./refs.bib}

\end{document}